\newtheorem{corollary}{Corollary}
\newtheorem{lemma}{Lemma}
\newtheorem{proposition}{Proposition}
\newtheorem{remark}{Remark}
\newtheorem{theorem}{Theorem}
\newtheorem{example}{Example}
\numberwithin{equation}{section}
\global\long\def\Vol{\mathrm{Vol}}
\begin{document}
\title[ $\rho$-Einstein solitons]{Geometric and analytical results\\ for $\rho$-Einstein solitons}

\author{Caio Coimbra}

\address[C. Coimbra]{Departamento  de Matem\'atica, Universidade Federal do Cear\'a - UFC, Campus do Pici, Av. Humberto Monte, Bloco 914,
60455-760, Fortaleza - CE, Brazil}
\email{caioadler@alu.ufc.br}

\thanks{C. Coimbra was partially supported by CAPES/Brazil - Finance Code 001}

	\thanks{Corresponding Author: C. Coimbra  (caioadler@alu.ufc.br)}

\keywords{$\rho$-Einstein solitons; spectrum gap; volume growth estimate}

\subjclass[2020]{Primary 53C20, 53C25; Secondary 53C65.}

\date{December 19, 2024}

\begin{abstract}
In this article, we study geometric and analytical features of complete noncompact $\rho$-Einstein solitons, which are self-similar solutions of the Ricci-Bourguignon flow. 
We study the spectrum of the drifted Laplacian operator for complete gradient shrinking $\rho$-Einstein solitons. Moreover, similar to classical results due to Calabi-Yau and Bishop for complete Riemannian manifolds with nonnegative Ricci curvature, we prove new volume growth estimates for geodesic balls of complete noncompact $\rho$-Einstein solitons. In particular, the rigidity case is discussed. In addition, we establish weighted volume growth estimates for geodesic balls of such manifolds. 
\end{abstract}

\maketitle
\section{Introduction}\label{intro}

The classical Lichnerowicz theorem \cite{lichnerowicz1958geometrie} states that if $(M^n,\,g)$ is a compact (without boundary) Riemannian manifold with bounded Ricci curvature $Ric \geq \alpha,$ where $\alpha$ is a positive constant, then the first nonzero eigenvalue $\lambda_1(\Delta)$ of the Laplacian operator $\Delta,$ also known as spectrum gap, must satisfy $\lambda_1(\Delta)\geq \frac{n}{n-1}\alpha.$ Furthermore, Obata's theorem \cite{obata1962certain} says that the equality holds if and only if $(M^n,\,g)$ is an $n$-dimensional sphere with constant sectional curvature $\frac{\alpha}{n-1}$. This raised the question whether a similar result holds true for smooth metric measure spaces. In this context, we recall that a smooth metric measure space $(M^n,\,g,\,e^{-f}dV)$ is a complete $n$-dimensional Riemannian manifold with a potential function $f:M\to \Bbb{R}$ and the weighted volume $e^{-f}dV$ in $M.$ For such spaces, it is more natural to consider the drifted Laplacian operator

\begin{equation*}
    \Delta_f = \Delta - \langle\nabla f,\nabla\,\cdot\,\rangle.
\end{equation*} This comes from the fact that $\Delta_f$ is a densely defined self-adjoint operator in $L^2(M,\,e^{-f}dv)$ and hence, for $u,\,v \in C_{0}^{\infty}(M),$ one sees that
\begin{equation*}
    \int_M u\Delta_fve^{-f}\,dV = -\int_M\langle\nabla u,\nabla v\rangle e^{-f}\,dV.
\end{equation*} Moreover, instead of the usual Ricci tensor, we may consider the Bakry-\'Emery Ricci tensor given by
\begin{equation*}
    Ric_f := Ric + \nabla^2 f,
\end{equation*} where $\nabla^2 f$ stands for the hessian of the potential function $f.$ However, it is important to highlight that, in this case,
certain topological differences arise. For instance, the Bonnet-Myers theorem does not hold under the assumption that $Ric_f \geq \delta > 0$ when $f$ is non-constant, and the manifold may fail to be compact, as exemplified by the Gaussian shrinking soliton $(\mathbb{R}^n,\,g_{_{can}},\,f(x)=\frac{|x|^2}{4}).$

By the works of Bakry-\'Emery \cite{bakry2006diffusions}, Morgan \cite{morgan2005manifolds}, Hein-Naber \cite{hein2014new} and Cheng-Zhou \cite{cheng2017eigenvalues}, it is known the following Lichnerowicz-Obata type theorem for smooth metric measure spaces.
 
\begin{theorem}[\cite{bakry2006diffusions,cheng2017eigenvalues,hein2014new,morgan2005manifolds}]
\label{theo1}
    Let $(M^n,\,g,\,e^{-f}dV)$ be a complete smooth metric measure space with $Ric_f \geq \frac{\alpha}{2}g$ for some positive constant $\alpha.$ Then the spectrum of the drifted Laplacian operator $\Delta_f$ is discrete and the first nonzero eigenvalue, denoted by $\lambda_1(\Delta_f)$, must satisfy
    \begin{equation}\label{eq1}
        \lambda_1(\Delta_f) \geq \frac{\alpha}{2}.
    \end{equation} Moreover, equality holds in (\ref{eq1}) with multiplicity $k\geq 1$ if and only if 
  \begin{enumerate}
      \item $1\leq k\leq n$;
      \item $M$ is a noncompact manifold which is isometric to $\Sigma^{n-k}\times \mathbb{R}^k$ with the product metric for some complete $(n-k)$-dimensional manifold $(\Sigma,g_{\Sigma})$ satisfying $Ric^{\Sigma}_f\geq \frac{\alpha}{2}g_{\Sigma}$ and $\lambda_1(\Delta^{\Sigma}_f) > \frac{\alpha}{2}$;
      \item By passing an isometry, for $(x,t) \in \Sigma^{n-k}\times\mathbb{R}^k,$
      \begin{equation*}
          f(x,t) = f(x,0) + \frac{\alpha}{2}|t|^2.
      \end{equation*}
  \end{enumerate}
\end{theorem}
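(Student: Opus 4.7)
My plan is to combine a Bochner-Weitzenböck argument for the drifted Laplacian with spectral theory on weighted spaces, and then use a de Rham-type splitting in the rigidity case.

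First, I would establish discreteness of the spectrum. The condition $Ric_f \geq \frac{\alpha}{2}g$ with $\alpha > 0$ forces the weighted volume $\int_M e^{-f}\,dV$ to be finite (by Morgan) and yields a Bakry-\'Emery log-Sobolev inequality. The latter gives compactness of the embedding $H^1(M,\,e^{-f}dV) \hookrightarrow L^2(M,\,e^{-f}dV)$, so the resolvent of the self-adjoint operator $\Delta_f$ is compact and its spectrum consists of discrete eigenvalues of finite multiplicity.

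For the lower bound, I would invoke the Bochner-Weitzenb\"ock identity for $\Delta_f$:
\begin{equation*}
\frac{1}{2}\Delta_f |\nabla u|^2 = |\nabla^2 u|^2 + \langle \nabla \Delta_f u,\,\nabla u\rangle + Ric_f(\nabla u,\,\nabla u).
\end{equation*}
For an eigenfunction $u$ with $\Delta_f u = -\lambda u$, integrating against $e^{-f}dV$ and using the self-adjointness of $\Delta_f$ (with decay of $u$ and $\nabla u$ in the weighted $L^2$ sense justifying the boundary terms) yields
\begin{equation*}
\lambda \int_M |\nabla u|^2 e^{-f}\,dV = \int_M |\nabla^2 u|^2 e^{-f}\,dV + \int_M Ric_f(\nabla u,\,\nabla u)\,e^{-f}\,dV.
\end{equation*}
Dropping the nonnegative Hessian term and applying $Ric_f \geq \frac{\alpha}{2}g$ gives the desired inequality (\ref{eq1}).

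In the equality case with multiplicity $k$, the identity forces $\nabla^2 u_i \equiv 0$ and $Ric_f(\nabla u_i,\,\nabla u_i) = \frac{\alpha}{2}|\nabla u_i|^2$ for each of $k$ independent eigenfunctions $u_1,\ldots,u_k$. Thus each $\nabla u_i$ is parallel, and together they span a $k$-dimensional parallel distribution; by de Rham's theorem, $M$ splits isometrically as $\Sigma^{n-k}\times\mathbb{R}^k$ (the Euclidean factor forcing noncompactness, and $k\leq n$ being automatic). Combining $\Delta u_i \equiv 0$ with $\Delta_f u_i = -\frac{\alpha}{2}u_i$ shows $\partial_{t_i} f$ is linear in $t_i$, which after a suitable translation integrates to the stated form of $f$. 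Finally, the strict inequality $\lambda_1(\Delta^{\Sigma}_f) > \frac{\alpha}{2}$ must hold, for otherwise pulling back a $\Sigma$-eigenfunction would enlarge the $\frac{\alpha}{2}$-eigenspace beyond multiplicity $k$, contradicting the choice of $k$. The main obstacles I anticipate are (i) justifying the integration by parts on the noncompact manifold via adequate weighted-$L^2$ decay of eigenfunctions, and (ii) globally tracking how the potential $f$ decouples across the de Rham factors to acquire its prescribed quadratic profile on the Euclidean part.
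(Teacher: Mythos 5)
Your outline follows exactly the standard route in the cited literature (Morgan/Hein--Naber for finite weighted volume and the log-Sobolev--based compact embedding, then the weighted Bochner formula, then splitting via parallel gradients), which is also the circle of ideas this paper reuses later; note that the paper itself gives no proof of Theorem \ref{theo1} --- it is quoted from \cite{bakry2006diffusions,cheng2017eigenvalues,hein2014new,morgan2005manifolds} --- and its closest in-house analogue is the proof of Theorem \ref{theo2}. Two points deserve attention. First, the step you defer as ``obstacle (i)'' is the actual crux of the lower bound: eigenfunctions are only known a priori to satisfy $u,\nabla u\in L^2(e^{-f}dV)$, and one cannot simply invoke ``decay'' to kill boundary terms. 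The correct mechanism (used in the paper's proof of Theorem \ref{theo2}, following Cheng--Zhou) is to multiply the Bochner identity by a cut-off $\phi^2$, use Young's inequality to absorb the cross term and first conclude $\int_M|\nabla^2u|^2e^{-f}dV<\infty$, and only then let the cut-off exhaust $M$ to get $\int_M\Delta_f|\nabla u|^2e^{-f}dV=0$; your proposal should be completed along these lines rather than by an appeal to pointwise decay of $u$.

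Second, your rigidity computation does not actually land on ``the stated form'' of $f$. From $\nabla^2u_i\equiv 0$ and $\Delta_fu_i=-\tfrac{\alpha}{2}u_i$ one gets $\langle\nabla f,\nabla u_i\rangle=\tfrac{\alpha}{2}u_i$, i.e. (after aligning coordinates with the parallel gradients and translating) $\partial_{t_i}f=\tfrac{\alpha}{2}t_i$, which integrates to $f(x,t)=f(x,0)+\tfrac{\alpha}{4}|t|^2$, not $f(x,0)+\tfrac{\alpha}{2}|t|^2$. This is consistent with the Gaussian shrinker ($\alpha=1$, $f=|x|^2/4$, $\Delta_f x_i=-\tfrac12 x_i$), and indeed with $f(x,0)+\tfrac{\alpha}{2}|t|^2$ the coordinate functions $t_i$ would be eigenfunctions with eigenvalue $\alpha$, contradicting $\lambda_1=\tfrac{\alpha}{2}$ with multiplicity $k$. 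So the constant in item (3) of the statement as printed appears to be a misprint of the Cheng--Zhou result, and a careful execution of your own argument exposes it; as written, your claim that the linear relation for $\partial_{t_i}f$ ``integrates to the stated form'' is the one step that does not check out. The remaining small gaps (pointwise linear independence of the $k$ parallel gradients, and using the globally defined affine functions rather than bare de Rham splitting) are routine and fixable.
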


In this article, we investigate the geometric properties of gradient $\rho$-Einstein solitons. In particular, we establish a result analogous to Theorem \ref{theo1} for this class of manifolds. Before stating our main results, let us recall the definition of such manifolds. For a given $\rho \in \mathbb{R},$ we say that $(M^n,\,g,\,f,\lambda)$ is a gradient $\rho$-Einstein soliton if it satisfies the equation
\begin{equation}\label{eqfund}
    Ric + \nabla^2 f = (\rho R + \lambda)g.
\end{equation} Following the terminology used for Ricci solitons, we say that the  $\rho$-Einstein soliton is shrinking, steady or expanding if $\lambda > 0,$ $\lambda = 0$ or $\lambda <0,$ respectively. When $\rho = \frac{1}{2(n-1)},$ it is called {\it Schouten soliton}; see \cite{catino2016gradient}. Moreover, notice that the gradient Ricci soliton equation is obtained when $\rho = 0$ in (\ref{eqfund}).

Gradient $\rho$-Einstein solitons were first introduced in \cite{catino2016gradient}. According to the work of Catino, Mazzieri and Mongodi \cite{catino2015rigidity}, these solitons arise as self-similar solutions to the Ricci-Bourguignon flow, originally introduced by Bourguignon in \cite{bourguignon1981ricci},
\begin{equation*}
    \frac{\partial}{\partial t}g_t = -2(Ric_{g_t} - \rho Rg_t);
\end{equation*} see also \cite{catino2017ricci}.  In the last years, several topological, geometric and analytical features concerning Schouten solitons have also been proven. For instance, Catino and Mazzieri \cite{catino2016gradient} showed that a complete steady Schouten soliton must be Ricci flat. Moreover, they proved that  $3$-dimensional Schouten solitons are isometric to $\mathbb{R}^3,\mathbb{S}^3$ or $\mathbb{S}^2\times\mathbb{R}.$ While Borges \cite{borges2024rigidity} classified gradient Schouten solitons with vanishing Bach tensor. Furthermore, he \cite{borges2022complete} obtained interesting results regarding the asymptotic behavior of the potential function $f$ and the norm of its gradient. Cunha, Lemos and Roing \cite{cunha2023ricci} established conditions under which $\rho$-Einstein solitons have constant scalar curvature; see also  \cite{shaikh2021some}. Despite these results, it remains of interest to show that such manifolds are analytic. In \cite{catino2015rigidity}, Catino et al. proved that if $\rho \notin \{\frac{1}{n},\frac{1}{2(n-1)}\},$ then the metric $g$ and the potential function $f$ must be analytic.

Among the examples of $\rho$-Einstein solitons, we may mention that Einstein manifolds are natural examples of $\rho$-Einstein solitons with constant potential function $f.$ However, it is interesting to obtain examples with non-constant potential function $f,$ i.e., a {\it nontrivial} $\rho$-Einstein soliton. A classical nontrivial example can be obtained in the generalized cylinder (see \cite{borges2022complete}). To be precise, for $n \geq 3,$ $k \leq n,$ $\lambda \in \mathbb{R}$ and $\rho \in \mathbb{R}$ with $\rho \neq \frac{1}{k},$ we consider a $k$-dimensional Einstein manifold $(\Sigma^k,\,g_{\Sigma})$ with scalar curvature $R_{\Sigma} = \frac{k\lambda}{1-\rho k}.$ Besides, for $(x,p) \in \mathbb{R}^{n-k}\times \Sigma^k$, it suffices to take the potential function
\begin{equation*}
    f(x,p) = \frac{1}{2}\Bigg(\frac{\lambda}{1-\rho k}\Bigg)|x|^2
\end{equation*} to conclude that  $(\mathbb{R}^{n-k}\times \Sigma^k,g,f,\lambda)$ is a nontrivial $\rho$-Einstein soliton, where $g$ is the product metric. At the same time, we highlight that $\rho$-Einstein solitons with constant scalar curvature are precisely gradient Ricci solitons; see \cite{Hamilton}. Hence, it is very important to present examples of $\rho$-Einstein solitons with non-constant scalar curvature. As it was observed by Gomes and Agila, $\mathbb{H} \times_{h}\mathbb{F}^2$ and $\mathbb{R} \times_{h}\mathbb{F}^2,$ where $\mathbb{F}^2$ is a complete $2$-dimensional Ricci flat manifold, are nontrivial $\rho$-Einstein solitons with non-constant scalar curvature; for more details, see Examples \ref{ex1} and \ref{ex2} in Section \ref{prelim}; see also \cite{agila2024geometric}.

We are now ready to state our first main result, which establishes a Lichnerowicz-Obata-type theorem for gradient shrinking $\rho$-Einstein solitons. More precisely, we have the following result.

\begin{theorem}
\label{theo2}
    Let $(M^n,\,g,\,f,\,\lambda)$ be a complete gradient shrinking $\rho$-Einstein soliton with $\rho>0$ and nonnegative scalar curvature. Then the following assertions hold:
    \begin{enumerate}
        \item The spectrum of $\Delta_f$ is discrete;
        \item $\lambda_1(\Delta_f) \geq \lambda$;
        \item Equality holds in assertion $(2)$ if and only if $(M^n,g,f)$ is $(\mathbb{R}^n,\delta_{ij},\frac{\lambda}{2}|x|^2)$, i.e., the Gaussian shrinking soliton $(\mathbb{R}^n,\delta_{ij},\frac{|x|^2}{4}),$ up to scaling.
    \end{enumerate}
\end{theorem}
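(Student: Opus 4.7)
My plan is to derive parts (1) and (2) as direct consequences of Theorem \ref{theo1} applied to the soliton, and then extract the rigidity in (3) from the equality case of that theorem together with the extra information carried by the soliton equation itself.

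For (1) and (2), I would first rewrite (\ref{eqfund}) as $Ric_f = (\rho R + \lambda)g$. Since $\rho > 0$ and $R \geq 0$ by hypothesis, this gives $Ric_f \geq \lambda g = \tfrac{2\lambda}{2}g$, so Theorem \ref{theo1} applies with $\alpha = 2\lambda$, producing a discrete spectrum and the estimate $\lambda_1(\Delta_f) \geq \lambda$. The same bound also gives the inequality direction of (3).

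For the rigidity in (3), assume $\lambda_1(\Delta_f) = \lambda$. The equality clause of Theorem \ref{theo1} then forces an isometric splitting $(M, g) = (\Sigma^{n-k} \times \mathbb{R}^k, g_\Sigma \oplus g_{can})$ with $f(x, t) = f(x, 0) + \tfrac{\lambda}{2}|t|^2$, $Ric_f^\Sigma \geq \lambda g_\Sigma$, and $\lambda_1(\Delta_f^\Sigma) > \lambda$. On the $\mathbb{R}^k$ factor the Ricci curvature vanishes and $\nabla^2 f$ restricts to $\lambda g_{can}$, so evaluating (\ref{eqfund}) on a unit vector tangent to $\mathbb{R}^k$ yields $\lambda = \rho R + \lambda$, forcing $R \equiv 0$ on $M$. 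Since $R = R_\Sigma$ on the product, (\ref{eqfund}) restricted to $\Sigma$ becomes the shrinking Ricci soliton equation $Ric_\Sigma + \nabla^2_\Sigma f_\Sigma = \lambda g_\Sigma$ with $R_\Sigma \equiv 0$. I would then invoke the standard identity $\Delta_f R_\Sigma = 2\lambda R_\Sigma - 2|Ric_\Sigma|^2$ for gradient shrinking Ricci solitons to conclude $Ric_\Sigma \equiv 0$, so that $\nabla^2_\Sigma f_\Sigma = \lambda g_\Sigma$, and appeal to Tashiro's theorem to identify $(\Sigma, g_\Sigma, f_\Sigma)$ with a Euclidean factor carrying a Gaussian potential.

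To close the argument, I would check that on $(\mathbb{R}^{n-k}, g_{can}, \tfrac{\lambda}{2}|x|^2)$ the coordinate functions $x_i$ satisfy $\Delta_f x_i = -\lambda x_i$, so $\lambda_1(\Delta_f^\Sigma) = \lambda$; this contradicts the strict inequality $\lambda_1(\Delta_f^\Sigma) > \lambda$ unless $n - k = 0$. Hence $M = \mathbb{R}^n$ and $f = \tfrac{\lambda}{2}|x|^2$ up to an additive constant, which is the Gaussian shrinking soliton up to scaling. The converse direction follows from the same eigenfunction computation on $\mathbb{R}^n$ combined with the bound already established. I expect the main technical point to be the use of the shrinking-soliton identity for $\Delta_f R$ together with Tashiro's theorem to promote the abstract splitting produced by Theorem \ref{theo1} into a concrete Euclidean model; the remainder is bookkeeping on the product structure.
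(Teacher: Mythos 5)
Your argument is correct, but it follows a genuinely different route from the paper's. The paper does not use the equality clause of Theorem \ref{theo1} at all: it reproves the spectral lower bound from scratch via the weighted B\"ochner formula and a cut-off integration, arriving at the exact integral identity $0=\int_M|\nabla^2u|^2d\mu+(\lambda-\delta)\int_M|\nabla u|^2d\mu+\rho\int_MR|\nabla u|^2d\mu$; in the equality case $\delta=\lambda$ both remaining terms are nonnegative, so $\nabla^2u\equiv 0$ and $\int R|\nabla u|^2=0$, and a parallel-field splitting argument forces $R\equiv 0$, after which the author quotes the fact that a shrinking gradient Ricci soliton with vanishing scalar curvature is Gaussian \cite{chen,pigola}. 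You instead take the discreteness and the bound directly from Theorem \ref{theo1} (as the paper's own remark notes is possible), invoke its rigidity/splitting clause $M\cong\Sigma^{n-k}\times\mathbb{R}^k$ with $f(x,t)=f(x,0)+\tfrac{\lambda}{2}|t|^2$, and then use the soliton equation (\ref{eqfund}) on a unit vector tangent to the flat factor to force $\rho R\equiv 0$, hence $R\equiv 0$; finally you reprove the Gaussian identification by hand ($\Delta_fR=2\lambda R-2|Ric|^2$ plus Tashiro) rather than citing \cite{chen,pigola}. Both are valid; your route is shorter but leans on the full strength of the Cheng--Zhou equality theorem as a black box, whereas the paper's rearranged B\"ochner argument is self-contained and only needs the inequality machinery, which is precisely the advantage the author claims. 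Two small remarks: (i) your final contradiction with $\lambda_1(\Delta_f^{\Sigma})>\lambda$ is not really needed, since once $\Sigma$ is Euclidean with Gaussian potential the product is already the Gaussian soliton; (ii) you quietly use the correct Cheng--Zhou normalization $f(x,t)=f(x,0)+\tfrac{\alpha}{4}|t|^2$ (i.e.\ $\tfrac{\lambda}{2}|t|^2$, giving $\nabla^2f=\lambda g$ on the flat factor), whereas Theorem \ref{theo1} as printed in the paper reads $\tfrac{\alpha}{2}|t|^2$; your version is the right one (it is the only one consistent with the Gaussian model), but you should cite the original statement so the normalization you rely on is explicit.
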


It is worth noting that the discreteness and lower bound of the spectral gap also follow from Theorem \ref{theo1}. However, a key advantage of our approach
lies in the rearrangement of its proof, which allows us to establish the rigidity case.

As a consequence of Theorem \ref{theo2} combined with the scalar curvature estimates proved by Catino and Mazzieri \cite[Corollary 5.2]{catino2016gradient} and Borges \cite[Theorem 1.1]{borges2022complete}, we get the following corollary for Schouten solitons.

\begin{corollary}
    Let $(M^n,\,g,\,f,\,\lambda)$ be a complete gradient shrinking Schouten soliton. Then the following assertions hold:
        \begin{enumerate}
        \item The spectrum of $\Delta_f$ is discrete;
        \item $\lambda_1(\Delta_f) \geq \lambda$;
        \item Equality holds in assertion $(2)$ if and only if $(M^n,\,g,\,f)$ is $(\mathbb{R}^n,\delta_{ij},\frac{\lambda}{2}|x|^2)$, i.e., the Gaussian shrinking soliton $(\mathbb{R}^n,\delta_{ij},\frac{|x|^2}{4}),$ up to scaling.
    \end{enumerate}
\end{corollary}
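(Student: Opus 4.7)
The plan is to verify that every complete gradient shrinking Schouten soliton automatically fulfills the two hypotheses of \thmref{theo2}, so that all three assertions follow verbatim. A Schouten soliton is by definition a gradient $\rho$-Einstein soliton with $\rho=\tfrac{1}{2(n-1)}$, and since $n\geq 2$ this is strictly positive. Hence the assumption $\rho>0$ in \thmref{theo2} holds for free, and the only remaining hypothesis to check is that the scalar curvature is nonnegative.

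The nonnegativity of the scalar curvature is exactly what the cited references supply. The scalar curvature estimate of Catino--Mazzieri \cite[Corollary 5.2]{catino2016gradient} for complete gradient shrinking Schouten solitons yields $R\geq 0$, and Borges \cite[Theorem 1.1]{borges2022complete} provides a refined version of the same inequality. Either result gives us the missing hypothesis, and no additional computation is required on our part beyond recording these facts.

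With both hypotheses established, \thmref{theo2} applies directly. Assertion (1) of the corollary comes from the discreteness of the spectrum of $\Delta_f$ in assertion (1) of the theorem, assertion (2) comes from the inequality $\lambda_1(\Delta_f)\geq\lambda$ in assertion (2) of the theorem, and assertion (3) is the rigidity statement, which forces $(M^n,g,f)$ to be the Gaussian shrinking soliton $(\mathbb{R}^n,\delta_{ij},\tfrac{|x|^2}{4})$ up to scaling. As a consistency check for the rigidity case, note that the Gaussian shrinking soliton has $R\equiv 0$, so the Schouten soliton equation \eqref{eqfund} reduces to $\nabla^2 f=\lambda g$, which is satisfied by $f=\tfrac{\lambda}{2}|x|^2$ on Euclidean space; thus the rigid model is indeed a Schouten soliton.

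There is essentially no obstacle in the argument: the corollary is a direct specialization of \thmref{theo2}, and the only nontrivial input is the scalar curvature lower bound, which is already available in the literature. The only care required is to cite the correct form of the estimates from \cite{catino2016gradient} and \cite{borges2022complete} to ensure $R\geq 0$, so that Theorem \ref{theo2} is genuinely applicable on the entire soliton.
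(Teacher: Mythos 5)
Your proposal is correct and matches the paper's own reasoning exactly: the paper derives the corollary by observing that a Schouten soliton has $\rho=\tfrac{1}{2(n-1)}>0$ and that the cited estimates of Catino--Mazzieri and Borges give $R\geq 0$ for complete shrinking Schouten solitons, so Theorem \ref{theo2} applies directly. Your added consistency check that the Gaussian soliton is indeed a Schouten soliton is fine but not needed.
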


A precise understanding of the volume growth rate is a key geometric property from which various other features of the underlying Riemannian
manifold can be derived. A classical theorem due
to Calabi \cite{calabi} and Yau \cite{yau1} asserts that the geodesic balls of complete noncompact manifolds with nonnegative Ricci curvature have at least linear volume growth. While the classical Bishop volume comparison theorem \cite{bishop} guarantees that the geodesic balls of complete noncompact Riemannian manifolds with nonnegative Ricci curvature have at most polynomial volume growth. 
Similar results were obtained for gradient Ricci solitons in \cite{cao2010complete,chan2022volume,munteanu2009volume,Natasa} and quasi-Einstein manifolds in, e.g., \cite{barros2015bounds,BRR,cheng2022volume}. Recently, Borges \cite{borges2022complete} adapted some ideas by Cao and Zhou \cite{cao2010complete} in order to prove volume growth estimates for Schouten solitons. To be precise, he showed that given a complete noncompact shrinking Schouten soliton, there are positive constants $C_1$, $C_2$ and $r_0$, depending only on $\lambda$ and $n,$ such that
    \begin{equation*}
        C_1r^{\frac{n}{2}-\frac{(n-2)\theta}{4(n-1)\lambda}}\leq Vol(B_q(r))\leq C_2r^{n-\frac{(n-2)\delta}{2(n-1)\lambda}},
    \end{equation*}
   for any $r>r_0$, where $q \in M$, $\delta = \inf_{p\in M}R(p)$ and $\theta = \sup_{p\in M}R(p).$  The proof of this result relies on the behavior of the potential function $f.$ We highlight that it is not known whether the same approach is valid for $\rho$-Einstein solitons in general. In the same context, Munteanu and Wang \cite[Theorem 1.4]{munteanu2014geometry} showed that any $n$-dimensional smooth metric measure space $(M^n,\,g,\,e^{-f}dv)$ with $Ric_f \geq \frac{1}{2}$ and $|\nabla f|^2 \leq f$ must satisfy  $Vol(B_p(r)) \leq c(n)r^n$, for all $r>0$, where $c(n)$ is a constant depending only on the dimension of the manifold. While Cheng, Ribeiro and Zhou \cite[Theorem 5]{cheng2022volume} obtained the precise value of the constant $c(n)$ for gradient shrinking Ricci solitons. Similar estimates are interesting for  $\rho$-Einstein solitons. Here, by adapting some techniques outlined in \cite{cheng2022volume}, we establish the following volume growth estimate for geodesic balls of complete noncompact $\rho$-Einstein solitons.

\begin{theorem}\label{theo4}
    Let $(M^n\,,g,\, f)$ be a complete noncompact $n$-dimensional gradient shrinking $\rho$-Einstein soliton with $\rho > 0$ and $R \geq 0.$ Then, for all $r>0,$ the volume of the geodesic ball $B_p(r)$ must satisfy
    \begin{equation}\label{eq11}
        Vol(B_p(r)) \le  \int_{\mathbb{S}^{n-1}} \int_0^{r}e^{\Phi}r^{n-1}\,drd\theta,
    \end{equation}
    where  $$\Phi= -\frac{\lambda r^2}{6}+f(\theta, r)+f(p)-\frac{2}{r}\int_{0}^{r}f(\theta, s) \,ds.$$ Moreover, equality holds in (\ref{eq11}) for all $r>0$ if and only if $(M^n,\,g,\,f)$ is $(\mathbb{R}^n,\delta_{ij},\frac{\lambda|x-p|^2}{2})$, i.e., the Gaussian shrinking soliton $(\mathbb{R}^n,\delta_{ij},\frac{|x|^2}{4}),$ up to scaling and translation.
\end{theorem}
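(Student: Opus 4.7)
The plan is to apply a Riccati/Bishop-type comparison along radial geodesics emanating from $p$, use the soliton equation (\ref{eqfund}) to rewrite the radial Ricci term, and integrate the resulting scalar ODE inequality twice by parts to recover exactly the function $\Phi$.

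In geodesic polar coordinates around $p$, write the volume form as $dV = J(r,\theta)\,dr\,d\theta$, factor $J = r^{n-1}\tilde{J}$ with $\tilde{J}(0,\theta)=1$, and set $\phi(r,\theta) := \log \tilde{J}(r,\theta)$, so that $\phi(0,\theta)=\phi'(0,\theta)=0$ and $\phi'(r) = \Delta r - (n-1)/r$ along the geodesic $\gamma_\theta$. The classical Riccati inequality together with the identity $Ric(\partial_r,\partial_r) = \rho R + \lambda - \nabla^2 f(\partial_r,\partial_r)$ read off (\ref{eqfund}) yields, after rewriting in terms of $\phi$,
\begin{equation*}
\phi'' + \frac{2\phi'}{r} + \frac{(\phi')^2}{n-1} + \rho R + \lambda \;\leq\; f''(\theta,r),
\end{equation*}
where $f''(\theta,r) := \nabla^2 f(\partial_r,\partial_r)$. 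Under $\rho>0$ and $R\geq 0$, I would drop the two nonnegative terms $(\phi')^2/(n-1)$ and $\rho R$, multiply by $r^2$ (so the left-hand side becomes $(r^2\phi')'$), and integrate from $0$ to $r$. Two integrations by parts evaluate $\int_0^r s^2 f''(\theta,s)\,ds = r^2 f'(\theta,r) - 2rf(\theta,r) + 2\int_0^r f(\theta,s)\,ds$, so after dividing by $r^2$ the right-hand side becomes precisely $\Phi'(r,\theta)$. A final integration, using $\phi(0,\theta)=\Phi(0,\theta)=0$, gives $\phi\leq\Phi$, hence $J(r,\theta)\leq r^{n-1}e^{\Phi}$ up to the cut locus (extending $J$ by zero past it, as is standard), and (\ref{eq11}) follows after integration over $\mathbb{S}^{n-1}$.

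For the rigidity case, equality in (\ref{eq11}) for every $r>0$ forces the cut distance from $p$ to be infinite (otherwise $J$ vanishes past the cut locus while $r^{n-1}e^{\Phi}$ does not, breaking the volume equality), so that $\exp_p$ is a diffeomorphism. Equality also forces each discarded nonnegative term to vanish: $\rho R\equiv 0$ (hence $R\equiv 0$, since $\rho>0$) and $(\phi')^2\equiv 0$, so $\phi\equiv 0$ and therefore $\Phi\equiv 0$. The identity $J(r,\theta)=r^{n-1}$ together with the diffeomorphism property of $\exp_p$ is the equality case of Bishop's volume comparison, so $(M,g)\simeq(\mathbb{R}^n,\delta_{ij})$. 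Differentiating $\Phi\equiv 0$ twice in $r$ yields $\nabla^2 f(\partial_r,\partial_r)\equiv\lambda$ in every direction, so in Cartesian coordinates centered at $p$ we have $f(x)=f(p)+\langle\nabla f(p),x-p\rangle+\frac{\lambda}{2}|x-p|^2$; after the standard translation and additive normalizations, which preserve the soliton structure, $f$ reduces to $\frac{\lambda}{2}|x-p|^2$, while the converse is a direct check on the Gaussian soliton.

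The main obstacle is the rigidity analysis: one has to track precisely which of the hypotheses $\rho>0,R\geq 0$ licenses which dropped term, then promote the pointwise Bishop equality to a global isometry with Euclidean space, and finally determine $f$ up to the permitted symmetries. Handling the cut locus is secondary and routine, resolved by the observation that global equality in (\ref{eq11}) forces the cut distance to be infinite.
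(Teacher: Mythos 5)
Your derivation of the inequality (\ref{eq11}) is essentially the paper's argument, just reorganized: with $\phi=\log\bigl(J/r^{n-1}\bigr)$ your Riccati inequality is exactly (\ref{eqvar1}), your discarded term $(\phi')^{2}/(n-1)$ is the paper's $\tfrac{1}{n-1}(tw-(n-1))^{2}$, the double integration by parts of $r^{2}f''$ reproduces $\Phi$ correctly (including $\Phi(0)=0$), and the cut-locus treatment via $\min\{r,\rho(\theta)\}$ is the same. So the first half is fine and matches the paper.

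The rigidity half is where you take a genuinely different route, and where there is a gap. The paper extracts only $R\equiv 0$ from equality (vanishing of the discarded $\rho R$ term), notes that the soliton equation then reads $Ric+\nabla^{2}f=\lambda g$ with zero scalar curvature, and concludes by citing the known rigidity of gradient shrinking Ricci solitons with vanishing scalar curvature \cite{chen,pigola}. You instead argue directly: equality forces $\rho(\theta)=\infty$, $\rho R\equiv 0$ and $\phi'\equiv 0$, hence $J\equiv r^{n-1}$, and you then invoke \emph{the equality case of Bishop's volume comparison} to get $(M,g)\cong(\mathbb{R}^{n},\delta_{ij})$. As stated this step is unjustified: Bishop's rigidity uses the hypothesis $Ric\geq 0$, which you do not have here, and $J\equiv r^{n-1}$ together with $\exp_{p}$ being a diffeomorphism only fixes the volume density in normal coordinates, not the metric, so it does not by itself yield flatness. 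The repair is available inside your own chain: equality also forces equality in the Cauchy--Schwarz step $|h|^{2}\geq H^{2}/(n-1)$ underlying (\ref{eqvar1}), so the geodesic spheres are umbilic with $h=\frac{1}{r}\,g$, which integrates to $g=dr^{2}+r^{2}g_{\mathbb{S}^{n-1}}$, i.e.\ flatness; alternatively, once $R\equiv 0$ is known you can simply follow the paper and quote the soliton classification, which sidesteps the issue entirely. Two smaller points: differentiating $\Phi\equiv 0$ gives $\nabla^{2}f(\partial_{r},\partial_{r})=\lambda$ only in the radial direction at each point, not ``in every direction'' (though this determines $f$ along rays, and on flat space with $R\equiv 0$ the soliton equation gives $\nabla^{2}f=\lambda g$ directly); and your use of a translation to kill the linear term is indeed necessary, since equality does not force $\nabla f(p)=0$.
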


\begin{remark}
We point out that Theorem \ref{theo2} and Theorem \ref{theo4} are also true if we assume $\rho < 0$ and $R \leq 0,$ with a minor difference in the rigidity part of Theorem \ref{theo2}. In particular, the proof of this fact is essentially the same. 
\end{remark}

In general, as in the case of smooth metric measure spaces, it is also very important to obtain weighted volume growth estimates for geodesic balls for $\rho$-Einstein solitons. This is due to the fact that the existence of the potential function $f$ provides us useful analytical information associated with the drifted Laplacian operator. In \cite[Lemma 1]{agila2024geometric}, Agila and Gomes proved an estimate for the weighted volume of geodesic balls under suitable conditions involving the scalar curvature and the potential function. However, it is not hard to check that the integral expression obtained by them in the result diverges. Therefore, it is interesting to obtain new (refined) volume growth estimates for such manifolds. As a consequence of the proof of Theorem \ref{theo4}, we have the following result.

\begin{theorem}\label{theo5}
    Let $(M^n,\,g,\, f)$ be a complete noncompact $n$-dimensional gradient $\rho$-Einstein soliton. Then
    \begin{equation}\label{eq120}
        Vol_{f}(B_p(r)) \le  \int_{\mathbb{S}^{n-1}} \int_0^{r}e^{\Psi}r^{n-1}\,drd\theta,
    \end{equation}
    where  $$\Psi= -\frac{\lambda r^2}{6}+ f(p)-\frac{2}{r}\int_{0}^{r}f(\theta, s)ds -\frac{1}{r}\int_{0}^r\int_{0}^s t\rho R\,dtds.$$ 
\end{theorem}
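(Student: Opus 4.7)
The plan is to extract a sharper Jacobian estimate from the proof of Theorem \ref{theo4}: namely, the bound obtained just before the hypothesis $\rho R \geq 0$ is invoked to discard the curvature term. Working in geodesic polar coordinates centered at $p$ inside the cut locus, let $J(\theta, s)$ denote the Jacobian of the exponential map, so that
\begin{equation*}
Vol_f(B_p(r)) = \int_{\mathbb{S}^{n-1}} \int_0^r e^{-f(\theta, s)} J(\theta, s)\, ds\, d\theta,
\end{equation*}
and it suffices to control $e^{-f(\theta, r)} J(\theta, r)$ pointwise by $r^{n-1} e^{\Psi}$.

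First I would set $A(s) = \log(J(\theta, s)/s^{n-1})$, so that $A(0) = 0$ and $A'(s) = \Delta r - \tfrac{n-1}{s}$ along the radial geodesic $\gamma$. The Bochner/Riccati identity $(\Delta r)' + |\nabla^2 r|^2 + Ric(\nabla r, \nabla r) = 0$, together with the Newton inequality $|\nabla^2 r|^2 \geq (\Delta r)^2/(n-1)$ and the soliton equation (\ref{eqfund}) rewritten radially as $Ric(\nabla r, \nabla r) = \rho R + \lambda - f''(s)$, then produces
\begin{equation*}
(s^2 A'(s))' + \frac{s^2 (A'(s))^2}{n-1} \leq s^2 f''(s) - s^2(\rho R + \lambda).
\end{equation*}
Integrating over $[0, r]$, dropping the nonnegative gradient term, dividing by $r^2$, and integrating once more—after careful integration by parts (using $\lim_{u \to 0^{+}} \tfrac{1}{u} \int_0^u f(\theta, s)\, ds = f(p)$ and the Fubini identity $\int_0^r u^{-2} \int_0^u s^2 \rho R\, ds\, du = \tfrac{1}{r} \int_0^r \int_0^s t \rho R\, dt\, ds$)—I obtain the pointwise Jacobian bound
\begin{equation*}
A(r) \leq f(\theta, r) + f(p) - \frac{2}{r} \int_0^r f(\theta, s)\, ds - \frac{\lambda r^2}{6} - \frac{1}{r} \int_0^r \int_0^s t \rho R\, dt\, ds.
\end{equation*}
This is precisely the intermediate estimate established in the proof of Theorem \ref{theo4}, before $\rho R \geq 0$ is used to absorb the last term into $\Phi$.

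For Theorem \ref{theo5}, the crucial observation is that we may retain the $\rho R$ term rather than discarding it. Subtracting $f(\theta, r)$ from both sides of the Jacobian bound gives $\log\bigl(e^{-f(\theta, r)} J(\theta, r)/r^{n-1}\bigr) \leq \Psi$, hence $e^{-f(\theta, r)} J(\theta, r) \leq r^{n-1} e^{\Psi}$. Integrating over $\mathbb{S}^{n-1} \times (0, r)$—and noting that the bound extends past the cut locus where $J$ vanishes—then yields (\ref{eq120}). No sign condition on $\rho$ or $R$ is required, since the $\rho R$ contribution is simply carried into the final estimate rather than dropped.

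The main technical point will be the nested integration by parts above; it is delicate but routine, with the $+f(p)$ term arising from the boundary behavior of $\tfrac{1}{u} \int_0^u f(\theta, s)\, ds$ as $u \to 0^{+}$, and the Fubini identity being what converts the form in which $\rho R$ naturally emerges from the double integral into the cleaner expression displayed in $\Psi$. Otherwise the argument is a direct rereading of the Jacobian comparison used for Theorem \ref{theo4}.
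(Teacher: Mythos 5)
Your proposal is correct and follows essentially the same route as the paper: the key estimate is exactly the intermediate bound (\ref{eq20}) from the proof of Theorem \ref{theo4}, kept with the $\rho R$ term rather than discarded, after which multiplying by $e^{-f(\theta,r)}$ cancels the $f(\theta,r)$ term and integration in polar coordinates (up to the cut locus) gives (\ref{eq120}). Your reformulation via $A(s)=\log\bigl(J(\theta,s)/s^{n-1}\bigr)$ and the Fubini identity is just a reorganization of the paper's computation with $w=\partial_r\log J$, and all the stated identities check out.
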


Combining Theorem \ref{theo5} and the potential function estimates obtained by Munteanu and Wang \cite[Theorem 0.4]{munteanu2015topology}, we get the following corollary.

\begin{corollary}\label{cor2}
         Let $(M^n,g, f)$ be a complete noncompact $n$-dimensional gradient shrinking $\rho$-Einstein soliton with $\rho R  \geq \delta > -\lambda.$ Then there exists $r_0 > 0$ such that, for all $r \geq r_0,$ 
     \begin{equation}
         Vol_{f}(B_p(r)) \leq \int_{\mathbb{S}^{n-1}}\int_{0}^{r}e^{\frac{r^2}{2}}r^{n-1}\,dr d\theta.
     \end{equation}
\end{corollary}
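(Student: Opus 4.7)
The plan is to estimate each term of the exponent $\Psi$ from Theorem~\ref{theo5} and show that $\Psi \leq \frac{r^{2}}{2}$ for $r$ sufficiently large; the corollary then follows immediately from (\ref{eq120}) and monotonicity of the exponential.

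First, applying the assumption $\rho R \geq \delta$ to the double integral gives
\begin{equation*}
-\frac{1}{r}\int_{0}^{r}\!\int_{0}^{s} t\rho R\,dt\,ds \;\leq\; -\frac{\delta}{r}\int_{0}^{r}\!\int_{0}^{s} t\,dt\,ds \;=\; -\frac{\delta\,r^{2}}{6}.
\end{equation*}
Together with the term $-\frac{\lambda r^{2}}{6}$ already appearing in $\Psi$, this contributes an aggregate $-\frac{\lambda+\delta}{6}r^{2}$, which is strictly negative by the assumption $\delta > -\lambda$.

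Next, to handle the remaining term $f(p) - \frac{2}{r}\int_{0}^{r} f(\theta,s)\,ds$, I would invoke the potential function estimates of Munteanu--Wang \cite{munteanu2015topology}. From the soliton equation (\ref{eqfund}),
\begin{equation*}
Ric_{f} \;=\; (\rho R + \lambda)g \;\geq\; (\lambda + \delta)g,
\end{equation*}
so the Bakry-\'Emery Ricci tensor admits a uniform positive lower bound. This is precisely the hypothesis under which their argument produces, after an additive normalization of $f$, a quadratic lower bound of the form $f(\theta,s) \geq c_{1}s^{2} - c_{2}$ valid for $s$ beyond some $s_{0} > 0$, with $c_{1} > 0$ depending on $n$ and $\lambda+\delta$. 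Integrating in $s$,
\begin{equation*}
-\frac{2}{r}\int_{0}^{r} f(\theta,s)\,ds \;\leq\; -\frac{2c_{1}}{3}\,r^{2} + O(1) \qquad \text{as } r \to \infty.
\end{equation*}

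Putting everything together, for $r$ larger than some $r_{0}$ (possibly larger than $s_{0}$),
\begin{equation*}
\Psi \;\leq\; -\frac{\lambda+\delta}{6}r^{2} \;-\; \frac{2c_{1}}{3}r^{2} \;+\; f(p) \;+\; O(1) \;\leq\; \frac{r^{2}}{2},
\end{equation*}
since the leading quadratic coefficient on the right is strictly negative. Substituting into (\ref{eq120}) yields the stated bound. The main delicate point is verifying that the Munteanu--Wang argument transplants to the $\rho$-Einstein setting: it should depend only on the Bakry-\'Emery lower bound (which is supplied by the soliton equation and the scalar curvature hypothesis) rather than on any intrinsic Ricci soliton structure, so the step should go through. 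Tracking the precise dependence of $c_{1}, c_{2}, s_{0}$ on $(\lambda, \delta, n)$ and on $f$ near $p$ is where care is required.
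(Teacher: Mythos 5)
Your proposal is correct and follows essentially the same route as the paper: bound the double-integral term by $-\tfrac{\delta r^{2}}{6}$ via $\rho R\geq\delta$, observe that the soliton equation gives $Ric_f\geq(\lambda+\delta)g>0$ so the Munteanu--Wang potential estimate (which only needs the Bakry--\'Emery lower bound, exactly as you suspect) yields a quadratic lower bound on $f$, and combine these to get $\Psi\leq -cr^{2}+O(r)$ with $c>0$, from which the comparison with $e^{r^{2}/2}$ follows for $r\geq r_0$. The only cosmetic difference is that the paper keeps the explicit bound $f(x)\geq\tfrac{\delta+\lambda}{2}r(x)^{2}-ar(x)$ rather than your normalized $c_1 s^{2}-c_2$ form, which changes nothing in the conclusion.
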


     In other words, Corollary \ref{cor2} asserts that the weighted volume of a complete noncompact gradient shrinking $\rho$-Einstein soliton with $\rho R  \geq \delta > -\lambda$ is less than or equal to the weighted volume of the Gaussian shrinking soliton.

\begin{remark}
We highlight that under the hypothesis of Corollary \ref{cor2}, it follows that
 $Ric+\nabla^2 f \geq (\delta+\lambda)g,$ which is a positive constant. In this situation, Wei and Wylie \cite[Theorem 3.1]{wei2009comparison} proved a similar estimate.
\end{remark}

The article is organized as follows. In Section \ref{prelim} we present some basic results on $\rho$-Einstein solitons and explicit examples. Section \ref{proofM} contains the proofs of Theorems \ref{theo2}, \ref{theo4} and \ref{theo5}, respectively.

\section{Preliminaries}
\label{prelim}

In this section, we present basic facts that are useful for the establishment of the main results. Moreover, we will describe some examples of gradient $\rho$-Einstein solitons. To begin with, for a fixed $p \in M,$ in terms of the polar normal coordinates at $p,$ we may write the volume element as $J(\theta,r) dr \wedge d\theta$, where $d\theta$ is the volume element of the $(n-1)$-dimensional sphere $\mathbb{S}^{n-1}.$ From the Gauss lemma, it is known that the area element of the geodesic sphere is given by $J(\theta,r)d\theta.$ Now, consider $x=(\theta,r)$, a point outside of the cut-locus $\mathcal{C}(p)$ of $p,$ and define
    \begin{equation*}
w(\theta,r)=\frac{\partial}{\partial r}\log J(\theta,r)=\frac{\frac{\partial J}{\partial r}}{J}(\theta,r).
\end{equation*} The second area variational formula in polar coordinates is given by
\begin{equation}\label{eqvar2}
\frac{\partial^2J}{\partial r^2}(\theta,r) = -\sum_{i,j=1}^{n-1}h_{ij}^2(\theta,r)J(\theta,r) - Ric\Big(\frac{\partial}{\partial r},\frac{\partial}{\partial r}\Big)J(\theta,r) + \frac{\Big(\frac{\partial J}{\partial r}\Big)^2}{J}(\theta,r),  
\end{equation}
where $h_{ij}(\theta,r)$ stands for the second fundamental form of $\partial B_p(r)$. Moreover, notice that
\begin{equation}\label{eqvar3}
    \frac{\partial w}{\partial r}(\theta,r) = \frac{\frac{\partial^2J}{\partial r^2}}{J}(\theta,r) - w^2(\theta,r).
\end{equation}
Plugging \eqref{eqvar3} into \eqref{eqvar2}, one sees that
\begin{equation*}
    \frac{\partial w}{\partial r}(\theta,r) + w^2(\theta,r) = -\sum_{i,j=1}^{n-1}h_{ij}^2(\theta,r) - Ric\Big(\frac{\partial}{\partial r},\frac{\partial}{\partial r}\Big) + \frac{\Big(\frac{\partial J}{\partial r}\Big)^2}{J^2}(\theta,r).
\end{equation*}
Since $h_{ij}$ is a 2-tensor, we have $|h|^2 \geq \frac{H}{n-1,}$ where $H(\theta,r)$ is the mean curvature of $\partial B_p(r)$. Therefore, it follows that
\begin{equation}
    \frac{\partial w}{\partial r}(\theta,r)+ \frac{1}{n-1}H^2(\theta,r) + Ric\Big(\frac{\partial}{\partial r},\frac{\partial}{\partial r}\Big)  \leq 0.
\end{equation}
Finally, observe that the first area variational formula gives $H(\theta,r) = w(\theta,r)$. From this, one concludes that $w(\theta,r)$ must satisfy
\begin{equation}\label{eqvar1}
w'(\theta,r)+\frac{1}{n-1}w^{2}(\theta,r)+Ric\Big(\frac{\partial}{\partial r},\frac{\partial}{\partial r}\Big)\le0,
\end{equation} where $w' := \frac{\partial w}{\partial r}$ and $Ric\Big(\frac{\partial}{\partial r},\frac{\partial}{\partial r}\Big)$ stands for the Ricci curvature in the radial direction; for more details see, e.g., \cite{li2012geometric}.

Proceeding, we recall special features established by Catino et al. \cite{catino2015rigidity} for $\rho$-Einstein solitons. The case of gradient Ricci solitons was proved by Hamilton \cite{hamilton1993formations}.

\begin{lemma}[\cite{catino2015rigidity}]
\label{lem1}
Let $(M^n,\,g,\,f,\,\lambda)$ be a gradient $\rho$-Einstein soliton. Then the following equations hold:
\begin{enumerate}
\item $\Delta f = (n\rho -1)R + n\lambda;$
\item $(1-2(n-1)\rho)\nabla R = 2Ric(\nabla f);$
\item $(1-2(n-1)\rho)\Delta R = \langle\nabla R,\nabla f\rangle + 2(\rho R^2 - |Ric|^2 + \lambda R).$
\end{enumerate}
\end{lemma}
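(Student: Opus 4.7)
The lemma records three scalar identities that follow directly from the defining equation
\[
Ric + \nabla^2 f = (\rho R + \lambda)g
\]
by contraction and differentiation. My plan is to treat each item in turn, in increasing order of work: a trace, a divergence, then a second divergence.

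For (1), I would simply take the metric trace of the soliton equation. Since $\mathrm{tr}_g Ric = R$, $\mathrm{tr}_g \nabla^2 f = \Delta f$, and $\mathrm{tr}_g g = n$, this immediately yields
\[
R + \Delta f = n(\rho R + \lambda),
\]
which rearranges to $\Delta f = (n\rho - 1)R + n\lambda$.

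For (2), I would take the divergence of the soliton equation. Using the contracted second Bianchi identity $\mathrm{div}(Ric) = \tfrac{1}{2}\nabla R$, the Ricci identity $\mathrm{div}(\nabla^2 f) = \nabla \Delta f + Ric(\nabla f)$ (which comes from commuting the two covariant derivatives), and $\mathrm{div}((\rho R + \lambda)g) = \rho\,\nabla R$, one gets
\[
\tfrac{1}{2}\nabla R + \nabla \Delta f + Ric(\nabla f) = \rho\,\nabla R.
\]
Substituting $\nabla \Delta f = (n\rho - 1)\nabla R$ from (1) and collecting the coefficients of $\nabla R$ produces exactly $2\,Ric(\nabla f) = (1 - 2(n-1)\rho)\nabla R$.

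For (3), I would take the divergence of identity (2). The left-hand side becomes $(1 - 2(n-1)\rho)\Delta R$. For the right-hand side I expand
\[
\mathrm{div}(Ric(\nabla f)) = (\mathrm{div}\,Ric)(\nabla f) + \langle Ric, \nabla^2 f\rangle = \tfrac{1}{2}\langle \nabla R, \nabla f\rangle + \langle Ric, \nabla^2 f\rangle,
\]
again using the contracted Bianchi identity. To handle $\langle Ric, \nabla^2 f\rangle$, I substitute $\nabla^2 f = (\rho R + \lambda)g - Ric$ from the soliton equation, giving $\langle Ric, \nabla^2 f\rangle = (\rho R + \lambda)R - |Ric|^2$. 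Assembling the pieces gives
\[
(1 - 2(n-1)\rho)\Delta R = \langle \nabla R, \nabla f\rangle + 2(\rho R^2 - |Ric|^2 + \lambda R),
\]
as required.

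There is no real obstacle here; the delicate point is simply to be careful with the commutator formula $\mathrm{div}(\nabla^2 f) = \nabla \Delta f + Ric(\nabla f)$ and the contracted Bianchi identity, and to keep track of the coefficients of $\nabla R$ when deriving (2), since these determine the $1 - 2(n-1)\rho$ factor that then propagates into (3).
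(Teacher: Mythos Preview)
Your derivation is correct: the three identities follow exactly as you outline, by tracing, taking a divergence (using the contracted Bianchi identity and the commutation formula $\mathrm{div}(\nabla^2 f)=\nabla\Delta f+Ric(\nabla f)$), and then taking one more divergence. The paper itself does not supply a proof of this lemma; it is merely quoted from \cite{catino2015rigidity}, so there is nothing to compare beyond noting that your argument is the standard one and matches what is done in that reference.
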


In \cite{borges2022complete}, Borges obtained some useful properties regarding the scalar curvature and the norm of the gradient of the potential function.

\begin{proposition}[\cite{borges2022complete}]
\label{propborges}
    Let $(M^n,\,g,\,f,\,\lambda)$ be a complete noncompact Schouten soliton. Suppose that $\lambda >0$ ($\lambda<0$, respectively). Then the potential function $f$ attains a global minimum (maximum, respectively) and it is unbounded from above (below, respectively). Furthermore, we have:
    \begin{equation*}
        0\leq R\lambda\leq 2(n-1)\lambda^2
    \end{equation*} and
    \begin{equation*}
        2\lambda(f-f_0)\leq |\nabla f|^2\leq 4\lambda(f-f_0),
    \end{equation*} where $f_0 = \min_{p\in M}f(p)$\,\,\,($f_0 = \max_{p\in M}f(p)$, respectively).
\end{proposition}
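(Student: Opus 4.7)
The approach is to exploit the fact that for a Schouten soliton $\rho = \frac{1}{2(n-1)}$, so the factor $1-2(n-1)\rho$ in Lemma~\ref{lem1}(2) vanishes. Thus $Ric(\nabla f)=0$, and combining the soliton equation written as $\nabla^2 f=(\rho R+\lambda)g-Ric$ with $\nabla_i|\nabla f|^2=2(\nabla^2 f)_{ij}\nabla^j f$ yields the key relation
\begin{equation*}
   \nabla|\nabla f|^2 = 2(\rho R+\lambda)\nabla f.
\end{equation*}
In particular $d(|\nabla f|^2)$ is proportional to $df$, so $|\nabla f|^2$ is locally a function of $f$; along any integral curve of $\nabla f$ one may treat $H:=|\nabla f|^2$ as a function of $f$, satisfying the ODE $dH/df = 2(\rho R+\lambda)$.

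Next I would invoke the scalar curvature estimate $0\le R\lambda\le 2(n-1)\lambda^2$. These bounds are known for Schouten solitons; they follow from a maximum-principle argument applied to the equation $\langle\nabla R,\nabla f\rangle=2(|Ric|^2-\rho R^2-\lambda R)$ (a direct consequence of Lemma~\ref{lem1}(3) in the Schouten case) combined with $|Ric|^2\ge R^2/n$, and are recorded in Catino--Mazzieri \cite{catino2016gradient}. In the shrinking case this forces $\lambda\le \rho R+\lambda\le 2\lambda$, so the ODE above is pinched as $2\lambda\le dH/df\le 4\lambda$.

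I would then establish that $f$ attains its global minimum at some point $p_0\in M$, adapting Hamilton's argument for Ricci solitons \cite{hamilton1993formations} to the Schouten setting (using the explicit formula $\Delta f=-\frac{n-2}{2(n-1)}R+n\lambda$ from Lemma~\ref{lem1}(1) together with the pinching above to control the behaviour of $f$ along complete geodesics). Once $p_0$ is in hand, $\nabla f(p_0)=0$ gives $H(p_0)=0$, and integrating the pinched ODE from $f_0:=f(p_0)$ yields
\begin{equation*}
   2\lambda(f-f_0)\le |\nabla f|^2\le 4\lambda(f-f_0),
\end{equation*}
first along gradient flow lines of $f$ emanating from $p_0$ and then globally, since $|\nabla f|^2$ is a function of $f$ and $M$ is connected.

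Unboundedness of $f$ then follows from the lower bound: rewriting it as $|\nabla\sqrt{f-f_0}|\ge\sqrt{\lambda/2}$ on $M\setminus\{p_0\}$, integration along a unit-speed minimising geodesic from $p_0$ forces $\sqrt{f-f_0}$ to grow at least linearly in distance, so $f$ is unbounded from above. The expanding case $\lambda<0$ is handled by the parallel argument applied to $-f$. I expect the main obstacle to be the rigorous production of the minimum point $p_0$: the pinched ODE only anchors the estimate once a critical point is available, and ruling out a minimising sequence that escapes to infinity requires combining completeness of $M$ with the soliton structure in a nontrivial way.
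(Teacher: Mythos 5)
First, note that the paper itself offers no proof of Proposition \ref{propborges}: it is quoted verbatim from Borges \cite{borges2022complete} as a preliminary, so your sketch can only be judged on its own merits and against Borges's original argument. Your central identity is the right one: for a Schouten soliton $1-2(n-1)\rho=0$, so Lemma \ref{lem1}(2) gives $Ric(\nabla f)=0$, hence $\nabla|\nabla f|^2=2(\rho R+\lambda)\nabla f$, and integrating $dH/df=2(\rho R+\lambda)\in[2\lambda,4\lambda]$ from a critical point is exactly the mechanism behind the pinching $2\lambda(f-f_0)\le|\nabla f|^2\le4\lambda(f-f_0)$. But the load-bearing steps have genuine gaps. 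The scalar curvature bound $0\le R\le 2(n-1)\lambda$ is itself part of the statement, and your justification fails: in the Schouten case the coefficient of $\Delta R$ in Lemma \ref{lem1}(3) vanishes, leaving only the first-order identity $\langle\nabla R,\nabla f\rangle=2(|Ric|^2-\rho R^2-\lambda R)$, to which no elliptic maximum principle applies on a noncompact manifold. In fact the nonnegativity $R\ge0$ is obtained by Catino--Mazzieri \cite{catino2016gradient} via the (parabolic) maximum principle along the Ricci--Bourguignon flow, while the upper bound $R\le2(n-1)\lambda$ is precisely Borges's Theorem 1.1 \cite{borges2022complete} --- not a bound ``recorded in Catino--Mazzieri'' --- and its proof is a substantial piece of the very work you are sketching.

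Second, the anchor point $p_0$. You rightly flag this as the main obstacle, but the fix you point to does not transfer: Hamilton's argument for shrinking Ricci solitons rests on the conservation law $R+|\nabla f|^2-2\lambda f=\mathrm{const}$, which fails here, since for a Schouten soliton $\nabla\bigl(|\nabla f|^2-2\lambda f\bigr)=2\rho R\,\nabla f$ need not vanish. The workable route (and, in substance, Borges's) is to flow backward along $\nabla f$: since $\bigl(\log|\nabla f|^2\bigr)'\ge2\lambda$ along integral curves, $|\nabla f|$ decays exponentially in backward time, the flow line has finite length, and completeness produces a limiting critical point; one must then still show that the critical value so obtained is the global minimum $f_0$ (not merely some critical value), a point your outline does not address. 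Finally, your unboundedness step is invalid as written: the lower bound $|\nabla\sqrt{f-f_0}|\ge\sqrt{\lambda/2}$ gives no information about the derivative of $\sqrt{f-f_0}$ along a chosen minimizing geodesic, since the gradient could be orthogonal to it; a gradient lower bound does not integrate along geodesics into growth. Unboundedness should instead come from the forward gradient flow, along which $(f\circ\gamma)'=|\nabla f|^2$ grows at least exponentially, or from the quadratic lower bound for $f$ in terms of distance proved by Borges. So the skeleton is correct, but the curvature bounds, the existence and uniqueness of the minimum value, and the unboundedness claim are either missing or argued by steps that do not hold.
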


For the case  $\lambda > 0,$ Catino and Mazzieri \cite{catino2016gradient} established lower bounds for the scalar curvature using the Ricci-Bourguignon flow. To the best of our knowledge, there is no version of Proposition \ref{propborges} for $\rho$-Einstein solitons in general.

In \cite{borges2022complete}, Borges also obtained the following proposition concerning the asymptotic behavior of the potential function of a shrinking Schouten soliton.

\begin{proposition}[\cite{borges2022complete}]
    Let $(M^n,\,g,\,f,\,\lambda)$ be a complete noncompact shrinking Schouten soliton. Then we have:
    \begin{equation*}
        \frac{\lambda}{4}(d(p,q) - A_1)^2 \leq f(p)- f_0\leq \lambda(d(p,q) + A_2)^2,
    \end{equation*}
    where $f_0 = \min_{p\in M} f(p)$ and $q$ is a point in $M,$ $A_1$ and $A_2$ are positive constants depending only on $\lambda$ and the unit ball $B_q(1)$ with $d(p,q)>2.$ 
\end{proposition}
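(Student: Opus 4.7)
The plan is to convert the two-sided bound $2\lambda(f-f_{0})\le |\nabla f|^{2}\le 4\lambda(f-f_{0})$ from Proposition~\ref{propborges} into a two-sided distance estimate via the auxiliary function $u:=\sqrt{f-f_{0}}$. Proposition~\ref{propborges} guarantees a point $p_{0}\in M$ with $f(p_{0})=f_{0}$, and on the open set $\{f>f_{0}\}$ the function $u$ is smooth with $\nabla u=\nabla f/(2u)$. Hence $|\nabla f|^{2}=4(f-f_{0})|\nabla u|^{2}$, and the estimates above become
\[
\tfrac{\lambda}{2}\le|\nabla u|^{2}\le \lambda.
\]
The upper bound extends continuously to all of $M$, so $u$ is globally Lipschitz with constant $\sqrt{\lambda}$, while the lower bound holds everywhere outside the minimum set. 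Each inequality will be integrated along a suitable path between $p$ and $q$.

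For the upper bound, I would integrate $|\nabla u|\le\sqrt{\lambda}$ along a minimizing unit-speed geodesic from $q$ to $p$, obtaining $u(p)-u(q)\le\sqrt{\lambda}\,d(p,q)$. Squaring gives
\[
f(p)-f_{0}\le\lambda\bigl(d(p,q)+A_{2}\bigr)^{2},\qquad A_{2}:=u(q)/\sqrt{\lambda}.
\]
Since $|\nabla f|^{2}(q)\ge 2\lambda(f(q)-f_{0})=2\lambda u(q)^{2}$, the constant $A_{2}$ satisfies $A_{2}\le |\nabla f(q)|/(\sqrt{2}\,\lambda)$, and is hence controlled by $\lambda$ and the local value $|\nabla f(q)|$, which is determined by the data on $B_{q}(1)$.

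For the lower bound, I would exploit $|\nabla u|\ge\sqrt{\lambda/2}$ via a gradient-flow argument. For $p$ with $u(p)>0$, consider the integral curve $\gamma$ of $-\nabla u/|\nabla u|^{2}$ starting at $p$: along $\gamma$, $u$ decreases at unit speed, so $\gamma$ reaches $\{u=0\}=\{f=f_{0}\}$ after time exactly $u(p)$, and its arclength is $\int_{0}^{u(p)}|\nabla u|^{-1}\,dt\le\sqrt{2/\lambda}\,u(p)$. This yields $d(p,p_{0})\le\sqrt{2/\lambda}\,u(p)$, that is, $u(p)\ge\sqrt{\lambda/2}\,d(p,p_{0})$. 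The triangle inequality $d(p,p_{0})\ge d(p,q)-d(q,p_{0})$ and the crude estimate $\lambda/2\ge\lambda/4$ then give
\[
f(p)-f_{0}\ge \tfrac{\lambda}{4}\bigl(d(p,q)-A_{1}\bigr)^{2},\qquad A_{1}:=d(q,p_{0}),
\]
and $A_{1}\le\sqrt{2/\lambda}\,u(q)\le|\nabla f(q)|/\lambda$ is again controlled by the data on $B_{q}(1)$. The hypothesis $d(p,q)>2$ ensures $d(p,q)-A_{1}>0$ once $A_{1}$ is bounded in terms of this local data.

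The main obstacle is the rigorous execution of the gradient-flow step. One must verify that the flow does not hit a stationary point of $u$ before reaching $\{u=0\}$; this is secured by the identity $\{\nabla f=0\}=\{f=f_{0}\}$ forced by the lower bound $|\nabla f|^{2}\ge 2\lambda(f-f_{0})$, so $|\nabla u|$ is bounded below everywhere on $\{u>0\}$ and the ODE for $\gamma$ is smooth. The (possibly singular) approach to the minimum set is handled by running the flow up to time $u(p)-\varepsilon$ and letting $\varepsilon\downarrow 0$. If the minimum set $\{f=f_{0}\}$ has positive dimension, the same argument works with $p_{0}$ chosen to be a (nearest) point of that set, which only sharpens the estimate.
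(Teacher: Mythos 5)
First, note that the paper itself does not prove this proposition; it quotes it from Borges \cite{borges2022complete}, whose argument for the growth of $f$ is an adaptation of the Cao--Zhou method. Your upper bound is fine and is essentially that standard argument: $u=\sqrt{f-f_0}$ is globally $\sqrt{\lambda}$-Lipschitz by the inequality $|\nabla f|^2\le 4\lambda(f-f_0)$, and integrating along a minimizing geodesic gives $f(p)-f_0\le\lambda\bigl(d(p,q)+A_2\bigr)^2$ with $A_2=u(q)/\sqrt{\lambda}$, which you correctly control by $|\nabla f|(q)$ via the lower gradient bound.

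The lower bound, however, has a genuine gap. The terminal point $p_0$ of the flow of $-\nabla u/|\nabla u|^{2}$ started at $p$ is some point of the minimum set $\{f=f_0\}$ \emph{depending on $p$}, so $A_1:=d(q,p_0)$ is not a constant; and the bound you invoke, $d(q,p_0)\le\sqrt{2/\lambda}\,u(q)$, controls the distance from $q$ to the endpoint of the flow started at $q$ (equivalently, to the nearest minimum point to $q$), not to the point reached from $p$. Choosing instead the minimum point nearest to $p$ does not help: what your flow argument actually proves is $f(p)-f_0\ge\tfrac{\lambda}{2}\,d\bigl(p,\{f=f_0\}\bigr)^{2}$, and the two gradient inequalities of Proposition~\ref{propborges} alone cannot convert distance to the minimum set into distance to the fixed point $q$. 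Indeed, a function such as $f=\tfrac{\lambda}{2}x_1^{2}$ on Euclidean space satisfies $2\lambda(f-f_0)\le|\nabla f|^{2}\le 4\lambda(f-f_0)$ while its minimum set is an unbounded hyperplane; and even in the soliton examples of the form $\mathbb{R}^{n-k}\times\Sigma^{k}$ the minimum set is a whole slice $\{0\}\times\Sigma$, so $d(q,p_0)$ genuinely depends on which minimum point the flow lands at. To close your argument you would need to show that $\{f=f_0\}$ lies in a ball about $q$ whose radius is controlled by $\lambda$ and $B_q(1)$; but that containment is itself a consequence of the proposition being proved, so it cannot be assumed, and it does not follow from the ingredients you use. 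This is precisely what the Cao--Zhou-type proof supplies: one works along minimizing geodesics issuing from $q$, writes $\nabla^{2}f=(\rho R+\lambda)g-Ric\ge\lambda g-Ric$ (using $\rho R\ge0$), and controls $\int Ric(\gamma',\gamma')$ by the second-variation inequality with a cutoff, which yields $\langle\nabla f,\gamma'\rangle\gtrsim\lambda\,t-c$ with $c$ depending only on $\lambda$, $n$ and the geometry of $B_q(1)$, and hence the quadratic lower bound with a legitimate constant $A_1$. Your flow argument is a nice way to recover the estimate in terms of $d\bigl(p,\{f=f_0\}\bigr)$, but as written it does not prove the stated inequality.
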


In the rest of this section, we are going to present some nontrivial examples of $\rho$-Einstein solitons; for more details, see, e.g., \cite{agila2024geometric}.

\begin{example}
\label{ex1}
Consider the standard $1$-dimensional hyperbolic space $\mathbb{H}$ and let  $\mathbb{F}^2$ be a complete $2$-dimensional Ricci flat manifold. So, we choose $h(x) = \coth(x)$ and $f(x) = \frac{2}{3}\log (\cosh (x)).$ Besides, $\mathbb{H} \times_{h}\mathbb{F}^2$ with the warping metric and potential function $f$ satisfies
   \begin{equation*}
       Ric+\nabla^2f = -\frac{2 + 4\cosh(2x)}{3\cosh^4(x)}.
   \end{equation*} Therefore, it defines a steady $\rho$-Einstein soliton with $\rho = \frac{1}{3}$ and nonconstant scalar curvature given by
   \begin{equation*}
       R = -\frac{2+4\cosh(2x)}{\cosh^4(x)}.
   \end{equation*}
\end{example}

Reasoning as in the previous case, it is not hard to check the following example.

\begin{example}
\label{ex2}
    Consider the metric $\cosh^2(x)\delta$ in $\mathbb{R}$ and let $\mathbb{F}^2$ be a complete $2$-dimensional Ricci flat manifold.  By taking $h=\cosh(x) $ and $f = \frac{1}{12}(8\log(\cosh (x) + \cos (2x)),$ one sees that $\mathbb{R} \times_{h}\mathbb{F}^2$ satisfies
        \begin{equation*}
        Ric+\nabla^2f= -\frac{3-\sinh^4(x)}{3\cosh^4(x)},
    \end{equation*} with $h$ as warping function and $f$ as potential function. So, $\mathbb{R} \times_{h}\mathbb{F}^2$ is a gradient shrinking $\rho$-Einstein soliton with $\rho = \lambda = \frac{1}{3}$ and scalar curvature given by
    \begin{equation*}
        R = -\frac{3+\cosh(2x)}{3\cosh^4(x)}.
    \end{equation*}
    \end{example}

    As it was pointed out by Agila and Gomes \cite{agila2024geometric}, Examples \ref{ex1} and \ref{ex2} have complete metrics. In the sequel, we present an example endowed with non-complete metric.

\begin{example}
\label{ex3}
    Consider $\mathbb{R}^n,$ $n\geq 3$ with coordinates $x=(x_1,\,\ldots,\,x_n)$ and metric $g=e^{2\xi}\delta,$ where $\xi = \sum_{i=1}^{n}\alpha_i x_i$ and $\sum_{i=1}^{n}\alpha_i^{2}= 1.$ Again, let $\mathbb{F}^m$ be a  Ricci flat manifold. Besides, choosing $h = e^{\xi}$ and $f = \frac{c}{2}e^{\xi} - \frac{(2-m-n)}{2}\xi$ as warping function and potential function, respectively. So, it is not difficult to check that $\mathbb{R}^n \times_{h}\mathbb{F}^m$ must satisfy
        \begin{equation*}
        Ric+\nabla^2f = c + \frac{2-m-n}{2}e^{-2\xi}.
    \end{equation*} Then, $\mathbb{R}^n \times_{h}\mathbb{F}^m$ is a gradient Schouten soliton with $\lambda = c.$ Furthermore, the scalar curvature is given by
    \begin{equation*}
        R = -(m+n-2)(m+n-1)e^{-2\xi}.
    \end{equation*}
    \end{example}

Example \ref{ex3} guarantees that the completeness hypothesis cannot be removed in the results by Catino and Mazzieri \cite[Theorem 1.5]{catino2016gradient} and Borges \cite[Theorem 1.1]{borges2022complete}.

\section{Proof of the Main Results}
\label{proofM}
In this section, we shall present the proof of Theorems \ref{theo2}, \ref{theo4} and \ref{theo5} and Corollary \ref{cor2}

\subsection{Proof of Theorem \ref{theo2}}
\label{sec3}

\begin{proof} To begin with, we consider an eigenfunction of the drifted Laplacian operator $\Delta_f$, i.e., 
    $$
    \Delta_f u + \delta u = 0, \qquad \int_M u^2e^{-f}dV< \infty,
    $$
    where $u\in H^{1}(M,\mu)\cap C^{\infty}(M).$ From the weighted B\"ochner formula
    
     $$\frac{1}{2}\Delta_f |\nabla u|^2 = |\nabla^2 u|^2 + \langle \nabla u,\nabla (\Delta_f u)\rangle + Ric_{f}(\nabla u,\nabla u),$$ one sees that
   
\begin{eqnarray}\label{eq2}
                \frac{1}{2}\Delta_f |\nabla u|^2 &=& |\nabla^2 u|^2 -\delta|\nabla u|^2 + (\rho R + \lambda)|\nabla u|^2\nonumber\\
            &=& |\nabla^2 u|^2 + (\lambda-\delta)|\nabla u|^2 + \rho R|\nabla u|^2.
\end{eqnarray} From now on, we adapt some ideas outlined by Cheng and Zhou in \cite{cheng2017eigenvalues} to prove the lower bound estimate. Fix a point $p\in M$ and let $B_p (r)$ be the geodesic ball centered at $p$ with radius $r.$ Moreover, choose a cut-off function $\phi$ such that $\phi \equiv 1$ in $B_r$, $\phi \equiv 0$ outside $B_{r+1}$ and $|\nabla\phi| \leq 1.$ Besides, multiplying (\ref{eq2}) by $\phi^2$ and integrating over $M,$ one obtains that
    \begin{eqnarray}\label{eq3}
        \frac{1}{2}\int_M \phi^2\Delta_f |\nabla u|^2d\mu &=& \int_M \phi^2|\nabla^2 u|^2d\mu + (\lambda-\delta)\int_M \phi^2|\nabla u|^2d\mu \nonumber\\&&+ \rho\int_M \phi^2R|\nabla u|^2d\mu ,
    \end{eqnarray} where $d\mu = e^{-f}dV.$

   Next, by the weighted Green formula, one deduces that
    \begin{align}\label{eq4}
                \int_M \phi^2\Delta_f |\nabla u|^2d\mu &= -\int_M\langle\nabla\phi^2,\nabla|\nabla u|^2\rangle d\mu\nonumber\\
        &=-4\int_M\phi\langle\nabla_{\nabla\phi}\nabla u,\nabla u\rangle d\mu\\
        &=- 4\int_M \phi\nabla^2 u(\nabla\phi,\nabla u)d\mu.\nonumber
    \end{align} At the same time, from Young's inequality, we get

\begin{align}\label{eq5}
              -2\phi(\nabla^2 u)(\nabla\phi,\nabla u) &= -2\sum_{i,j=1}^n \phi(\nabla^2 u)_{ij}\nabla_i\phi\nabla_ju\nonumber\\
              &\leq \sum_{i,j=1}^n\left(\varepsilon\phi^2(\nabla^2 u)_{ij}^2 + \frac{1}{\varepsilon}(\nabla_i\phi)^2(\nabla_j u)^2\right)\nonumber\\
              &=\varepsilon\phi^2|\nabla^2 u|^2 + \frac{1}{\varepsilon}|\nabla\phi|^2|\nabla u|^2.
\end{align} Plugging (\ref{eq5}) into (\ref{eq4}), we obtain

\begin{equation}
    \int_M\phi^2\Delta_f|\nabla u|^2d\mu \leq 2\varepsilon\int_M\phi^2|\nabla^2 u|^2d\mu + \frac{2}{\varepsilon}\int_M|\nabla\phi|^2|\nabla u|^2d\mu.
\end{equation} This jointly with (\ref{eq3}) gives
\begin{eqnarray*}
    (1-\varepsilon)\int_{B_{r+1}}\phi^2|\nabla^2 u|^2d\mu &\leq & \frac{1}{\varepsilon}\int_{B_{r+1}}|\nabla\phi|^2|\nabla u|^2d\mu + (\delta-\lambda)\int_{B_{r+1}}\phi^2|\nabla u|^2d\mu\nonumber\\&&- \rho\int_{B_{r+1}}\phi^2R|\nabla u|^2d\mu\\
    &\leq& \frac{1}{\varepsilon}\int_{B_{r+1}}|\nabla\phi|^2|\nabla u|^2d\mu + (\delta-\lambda)\int_{B_{r+1}}\phi^2|\nabla u|^2d\mu.
\end{eqnarray*} Since the right hand side of the above expression is integrable, letting $r\rightarrow \infty,$ we obtain that $\int_M|\nabla^2 u|^2d\mu < \infty.$

Also, by Young's inequality, one concludes that

\begin{align}\label{eq6}
    | 2\phi(\nabla^2 u)(\nabla\phi,\nabla u)| &= \bigg|2\sum_{i,j=1}^n \phi(\nabla^2 u)_{ij}\nabla_i\phi\nabla_ju\bigg|\nonumber\\
    &\leq \sum_{i,j=1}^n\varepsilon\phi^2(\nabla^2 u)_{ij}^2|\nabla_i\phi| + \frac{1}{\varepsilon}|\nabla_i\phi|(\nabla_j u)^2\nonumber\\
    &\leq \varepsilon\phi^2|\nabla^2 u|^2|\nabla\phi| + \frac{1}{\varepsilon}|\nabla\phi||\nabla u|^2\nonumber,
\end{align} which combined with (\ref{eq4}) yields
\begin{eqnarray}
\label{plkjl1}
    \bigg|\int_M\phi^2\Delta_f|\nabla u|^2d\mu\bigg| &\leq & 2\varepsilon\int_M\phi^2|\nabla^2 u|^2|\nabla\phi|d\mu + \frac{2}{\varepsilon}\int_M|\nabla\phi||\nabla u|^2d\mu\nonumber\\
    &\leq & 2\varepsilon\int_{B_{r+1}\setminus B_r}|\nabla^2 u|^2d\mu + \frac{2}{\varepsilon}\int_{B_{r+1}\setminus B_r}|\nabla u|^2d\mu.
\end{eqnarray} Letting $r\rightarrow \infty,$ and noting that the right hand side of (\ref{plkjl1}) is integrable, we conclude that it converges to zero. Thus 
\begin{equation*}
\int_M \Delta_f|\nabla u|^2d\mu = 0.
\end{equation*} Therefore, by using (\ref{eq3}), we arrive at
\begin{equation}\label{eq7}
    0 = \int_M |\nabla^2 u|^2d\mu + (\lambda-\delta)\int_M |\nabla u|^2d\mu + \rho\int_M R|\nabla u|^2d\mu.
\end{equation} Taking into account that $u$ is not constant, we conclude that $\delta \geq \lambda,$ which proves the asserted lower bound.

Now, we need to analyze the equality case, i.e., $\delta = \lambda.$ To do so, we claim that if equality holds, then $R\equiv 0.$ We argue by contradiction by supposing that $\delta = \lambda$ and $R\not\equiv 0.$ In this situation, since $R \geq 0,$ there exists a point $x \in M$ such that $R(x) > 0.$ Hence, from (\ref{eq7}), we have $\nabla^2u \equiv 0,$ which implies that  $\nabla u$ is a nontrivial parallel vector field. Therefore, $M^n$ must be isometric to a product manifold $M^n= \Sigma^{n-1}\times \mathbb{R},$ for some complete manifold $\Sigma^{n-1}.$ In particular, the function $u$ is constant on the level set $\Sigma\times \{t\},$ with $t\in \Bbb{R},$ and $|\nabla u|$ only depends on $\Bbb{R}.$ Moreover, the scalar curvature of $M^n$ only depends on the scalar curvature of $\Sigma^{n-1}.$ In another words, we have 

\begin{equation*}
    R|\nabla u|^2(p,q) = R_{\Sigma}(p)|\nabla u|^2(q),
\end{equation*} where $(p,\,q) \in \Sigma^{n-1}\times\mathbb{R},$ $R$ stands for the scalar curvature of $M^n$ and $R_{\Sigma}$ is the scalar curvature of $\Sigma^{n-1}.$ Besides, taking into account that $R(x) = R_{\Sigma}(p),$ where $x=(p,q)\in  \Sigma^{n-1}\times \mathbb{R},$ one sees that $R_{\Sigma}(p) > 0.$ At the same time, since $\rho R\geq 0$ and $\delta = \lambda,$ we have from (\ref{eq7}) that

\begin{equation}
    0 = R|\nabla u|^2(p,s) = R_{\Sigma}(p)|\nabla u|^2(s),\quad \forall s \in \mathbb{R}
\end{equation} Therefore,  $|\nabla u|^2 (s) =0, \,\, \forall s\in\mathbb{R},$ which contradicts the fact that $\nabla u$ is a nontrivial vector field and then, such a point $x$ does not exist. Consequently, $R\equiv 0.$

To conclude, since $R\equiv 0,$ we obtain that $(M^n,\,g,\,f,\,\lambda)$ must satisfy $Ric+\nabla^2 f = \lambda g$ then it is a gradient shrinking Ricci soliton with vanishing scalar curvature, from this we conclude that $(M^n,\,g,\,f,\lambda)$ satisfies $Ric+\nabla^2 f = \lambda g$ with zero scalar curvature. Hence, $M^n$ is the Gaussian shrinking soliton (see \cite{chen,pigola}), up to scaling. This concludes the proof of the theorem.
\end{proof}

\subsection{Proof of Theorem  \ref{theo4}}
\label{sec4}
\begin{proof}
Here, we adapt some ideas used by Cheng, Ribeiro and Zhou in \cite{cheng2022volume}, see also \cite{li2012geometric}. To begin with, multiplying \eqref{eqvar1} by $r^2$ and integrating from $\varepsilon$ to $r,$ we get
\begin{equation*}
\int_{\varepsilon}^{r}t^2w' dt + \frac{1}{n-1}\int_{\varepsilon}^{r}t^2w^2dt + \int_{\varepsilon}^{r} t^2Ric\Big(\frac{\partial}{\partial t},\frac{\partial}{\partial t}\Big) dt \le 0.
\end{equation*} This implies
\begin{equation*}
\int_{\varepsilon}^r(t^2w)'dt \le \int_{\varepsilon}^r\Big(2tw-\frac{1}{n-1}t^2w^2\Big)dt + -\int_{\varepsilon}^r t^2Ric\Big(\frac{\partial}{\partial t},\frac{\partial}{\partial t}\Big)dt.
\end{equation*}
Now, letting $\varepsilon \rightarrow 0,$ it follows that
\begin{align*}
    r^2w &\le \int_{0}^r\Big(2tw-\frac{1}{n-1}t^2w^2\Big)dt + -\int_{0}^r t^2Ric\Big(\frac{\partial}{\partial t},\frac{\partial}{\partial t}\Big)dt\\
    &= \int_{0}^{r}\Big(-\frac{1}{n-1}(tw-(n-1))^2 + (n-1)\Big)dt  -\int_{0}^r t^2Ric\Big(\frac{\partial}{\partial t},\frac{\partial}{\partial t}\Big)dt\\
    &\leq \int_{0}^r (n-1) - t^2Ric\Big(\frac{\partial}{\partial t},\frac{\partial}{\partial t}\Big)dt.
\end{align*}
Thus, one deduces that
\begin{equation}\label{eq8}
    \left(\log\Big(\frac{J(\theta,r)}{r^{n-1}}\Big)\right)' \le -\frac{1}{r^2}\int_{0}^r t^2Ric\Big(\frac{\partial}{\partial t},\frac{\partial}{\partial t}\Big)dt.
\end{equation}
Again, by integrating (\ref{eq8}) from $\varepsilon$ to $r,$ we arrive at
\begin{equation*}
\int_{\varepsilon}^r\left(\log\frac{J(\theta,r)}{s^{n-1}}\right)'ds \le -\int_{\varepsilon}^r\left[\frac{1}{s^2}\int_{0}^s t^2Ric\Big(\frac{\partial}{\partial t},\frac{\partial}{\partial t}\Big)dt\right]ds.
\end{equation*} Since $\displaystyle\lim_{r\to0}\frac{J(\theta,r)}{r}=1,$ by letting $\varepsilon \rightarrow 0$ and integrating by parts, we obtain
\begin{equation}\label{eq9}
    \log\Big(\frac{J(\theta,r)}{r^{n-1}}\Big) \le \frac{1}{r}\int_{0}^rt^2Ric\Big(\frac{\partial}{\partial t},\frac{\partial}{\partial t}\Big)dt - \int_{0}^r tRic\Big(\frac{\partial}{\partial t},\frac{\partial}{\partial t}\Big)dt.
\end{equation} Hence, combining (\ref{eq8}) and (\ref{eq9}), we get
\begin{equation}\label{eq10}
    \left(r\log\frac{J(\theta,r)}{r^{n-1}}\right)' \le -\int_{0}^r
tRic\Big(\frac{\partial}{\partial t},\frac{\partial}{\partial t}\Big)dt.\end{equation}

On the other hand, since $M^n$ is a gradient $\rho$-Einstein soliton, we have 
\begin{equation*}
Ric\Big(\frac{\partial}{\partial t},\frac{\partial}{\partial t}\Big) = \rho R + \lambda - f''(t),
\end{equation*} 
where $f(t) = f(\gamma(t))$, $0\le t \le r$, and $\gamma$ is a minimizing geodesic with $\gamma(0) = p$. Thus, (\ref{eq10}) becomes
\begin{align*} 
   \left(r\log\frac{J(\theta,r)}{r^{n-1}}\right)' &\le  -\int_{0}^r t(\rho R + \lambda - f''(t))dt \\
   &\leq -\frac{\lambda r^2}{2} + rf'(r) - f(r) + f(0) -\int_{0}^r t\rho Rdt.
\end{align*} From this, it follows that
\begin{align}\label{eq12}
\left(r\log\frac{J(\theta,r)}{r^{n-1}}\right) &\le -\frac{\lambda r^3}{6} +  \int_{0}^r tf'(t)dt - \int_{0}^r f(t) dt + rf(0) -\int_{0}^r\int_{0}^s t\rho R dtds\nonumber\\
&= -\frac{\lambda r^3}{6} +  \int_{0}^r (tf)'(t)dt - 2\int_{0}^r f(t) dt + rf(0) -\int_{0}^r\int_{0}^s t\rho R dtds\\
&= -\frac{\lambda r^3}{6} + rf(r) - 2\int_{0}^r f(t) dt + rf(0) -\int_{0}^r\int_{0}^s t\rho R dtds.\nonumber
\end{align}
Thus, by using the hypothesis, one deduces that
\begin{equation}\label{eq13}
\left(r\log\frac{J(\theta,r)}{r^{n-1}}\right) \le -\frac{\lambda r^3}{6} +  rf(r) + rf(0) - 2\int_{0}^r f(t)dt,
\end{equation} which yields 
\begin{equation*}
   J(\theta,r) \le e^{(-\frac{\lambda r^2}{6} + f(r) + f(p) - \frac{2}{r}\int_{0}^rf(t)dt)} r^{n-1}. 
\end{equation*} Consequently,
\begin{eqnarray}
\label{eq14}
\Vol(B_{p}(r)) &=&  \int_{\mathbb{S}^{n-1}}\int_{0}^{\min\{r,\rho(\theta)\}}J(\theta, r)dr d\theta \nonumber\\&\le&  \int_{\mathbb{S}^{n-1}}\int_{0}^{\min\{r,\rho(\theta)\}}e^{\Phi}r^{n-1}dr d\theta\nonumber\\
&\le &  \int_{\mathbb{S}^{n-1}}\int_{0}^{r}e^{\Phi}r^{n-1}dr d\theta,
\end{eqnarray} where  $\Phi=-\frac{\lambda r^2}{6} + f(\theta, r)+f(p)-\frac{2}{r}\int_{0}^{r}f(\theta, s)ds$ and $\rho(\theta)$ stands for the cut-locus radius in the direction of $\theta.$ Finally, if equality holds in (\ref{eq14}), then it must also hold in (\ref{eq13}). Therefore,
\begin{equation*}
    -\int_{0}^r\int_{0}^s t\rho Rdtds = 0,
\end{equation*}
 for all $0<r<\rho(\theta)$. Taking into account that $\rho R \geq 0,$ for each point $p\in M$ there exists an open neighborhood $U_p$ such that  $R\equiv 0$ in $U_p$ and hence, since $p$ is arbitrary, one deduces that $R \equiv 0$ in $M.$ Similar to the proof of the previous theorem, we conclude that $(M^n,g,f,\lambda)$ must satisfy $Ric+\nabla^2 f = \lambda g$ with zero scalar curvature. Consequently, $M^n$ is the Gaussian shrinking soliton (see \cite{chen,pigola}). So, the proof is completed. 

\end{proof}

\subsection{Proof of Theorem \ref{theo5} and Corollary \ref{cor2}}
\label{sec5}
In this subsection, we present the proof of Theorem \ref{theo5} and Corollary \ref{cor2}. For the sake of convenience, we restate the theorem here.

\begin{theorem}[Theorem \ref{theo5}]
    Let $(M^n,\,g,\, f)$ be a complete $n$-dimensional noncompact gradient $\rho$-Einstein soliton. Then we have:
    \begin{equation}
        Vol_{f}(B_p(r)) \le  \int_{\mathbb{S}^{n-1}} \int_0^{r}e^{\Psi}r^{n-1}drd\theta,
    \end{equation}
    where  $\Psi= -\frac{\lambda r^2}{6}+ f(p)-\frac{2}{r}\int_{0}^{r}f(\theta, s)ds -\frac{1}{r}\int_{0}^r\int_{0}^s t\rho Rdtds$. 
\end{theorem}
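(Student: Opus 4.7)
The plan is to recycle the Jacobian comparison already derived in the proof of Theorem~\ref{theo4}, which used only the soliton identity $Ric(\partial/\partial t,\partial/\partial t) = \rho R + \lambda - f''(t)$ combined with the Riccati-type inequality \eqref{eqvar1}. The key observation is that in passing from \eqref{eq12} to \eqref{eq13} the proof of Theorem~\ref{theo4} discarded the term $-\int_{0}^{r}\!\!\int_{0}^{s} t\rho R\,dt\,ds$ using the sign hypothesis $\rho R \ge 0$. Here I simply do \emph{not} discard it: this term is precisely what survives in the exponent $\Psi$, and the weighted integrand will absorb the remaining $f(\theta,r)$ factor.

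Concretely, starting from \eqref{eq12} and dividing by $r$, I obtain
$$\log\frac{J(\theta,r)}{r^{n-1}} \le -\frac{\lambda r^{2}}{6} + f(\theta,r) + f(p) - \frac{2}{r}\int_{0}^{r} f(\theta,s)\,ds - \frac{1}{r}\int_{0}^{r}\!\!\int_{0}^{s} t\rho R\,dt\,ds,$$
which after exponentiation yields the pointwise bound $J(\theta,r)\le e^{f(\theta,r)} e^{\Psi} r^{n-1}$, where $\Psi$ is exactly the function defined in the statement. Multiplying both sides by $e^{-f(\theta,r)}$ cancels the $f(\theta,r)$ factor that previously appeared in $\Phi$ of Theorem~\ref{theo4}, leaving
$$e^{-f(\theta,r)} J(\theta,r) \le e^{\Psi} r^{n-1}.$$

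Finally, since in polar normal coordinates the weighted volume reads $\Vol_{f}(B_p(r)) = \int_{\mathbb{S}^{n-1}}\int_{0}^{\min\{r,\rho(\theta)\}} e^{-f(\theta,s)} J(\theta,s)\,ds\,d\theta$, where $\rho(\theta)$ denotes the cut-locus radius in direction $\theta$, integrating the pointwise inequality and then enlarging the interval of integration from $[0,\min\{r,\rho(\theta)\}]$ to $[0,r]$ (harmless because $e^{\Psi}r^{n-1}\ge 0$) produces exactly \eqref{eq120}. I do not anticipate a genuine obstacle: the entire argument is a bookkeeping refinement of the computation already carried out for Theorem~\ref{theo4}, and no sign assumption on $\rho$, on $R$, or on $\lambda$ is required, because the $\rho R$ contribution is now carried inside $\Psi$ rather than estimated away. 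The only mild subtlety is that without a sign on $\rho R$ one cannot invoke rigidity as in Theorem~\ref{theo4}; this is reflected in the fact that Theorem~\ref{theo5} is stated without an equality case.
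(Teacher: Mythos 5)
Your proposal is correct and follows the paper's own argument essentially verbatim: the paper likewise takes inequality (\ref{eq12}) (restated as (\ref{eq20})), keeps the $-\int_0^r\!\int_0^s t\rho R\,dt\,ds$ term instead of discarding it, exponentiates, multiplies by $e^{-f(\theta,r)}$ to cancel the $f(\theta,r)$ factor, and integrates up to $\min\{r,\rho(\theta)\}$ before enlarging to $[0,r]$. Your remark that no sign assumption on $\rho R$ is needed (and hence no rigidity statement) also matches the paper.
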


\begin{proof} Initially, by using the volume form in polar coordinates $dV_{exp_{p}(r\theta)}=J(\theta,r)drd\theta$ for $\theta\in S_{p}M$ and a fixed point $p,$ we may denote the weighted volume form as $dV_f=e^{-f(r,\theta)}J(r,\theta)drd\theta$. It follows from the proof of Theorem \ref{theo4} that 
\begin{equation}\label{eq20}
r\log\frac{J(\theta,r)}{r^{n-1}} \le -\frac{\lambda r^3}{6} + rf(r) - 2\int_{0}^r f(t) dt + rf(0) -\int_{0}^r\int_{0}^s t\rho Rdtds.
\end{equation}
Consequently,
    \begin{equation*}
   J(\theta,r) \le e^{(-\frac{\lambda r^2}{6} + f(r) + f(p) - \frac{2}{r}\int_{0}^rf(t)dt-\frac{1}{r}\int_{0}^r\int_{0}^s t\rho Rdtds)} r^{n-1}, 
\end{equation*}
or equivalently,
 \begin{equation}
   J(\theta,r)e^{-f(r,\theta)} \le e^{(-\frac{\lambda r^2}{6}  + f(p) - \frac{2}{r}\int_{0}^rf(t)dt-\frac{1}{r}\int_{0}^r\int_{0}^s t\rho Rdtds)} r^{n-1}. 
\end{equation} Hence, by integration, one sees that
\begin{align}\label{eq100}
\Vol_{f}(B_{p}(r)) &=  \int_{\mathbb{S}^{n-1}}\int_{0}^{\min\{r,\rho(\theta)\}}J(\theta, r)e^{-f(r,\theta)}dr d\theta \nonumber\\&\le  \int_{\mathbb{S}^{n-1}}\int_{0}^{\min\{r,\rho(\theta)\}}e^{\Psi}r^{n-1}dr d\theta\nonumber\\
&\le  \int_{\mathbb{S}^{n-1}}\int_{0}^{r}e^{\Psi}r^{n-1}dr d\theta,
\end{align}
where $\Psi= -\frac{\lambda r^2}{6}+ f(p)-\frac{2}{r}\int_{0}^{r}f(\theta, s)ds -\frac{1}{r}\int_{0}^r\int_{0}^s t\rho Rdtds$, which finishes the proof of the theorem. 
\end{proof}

\subsubsection{Proof of Corollary  \ref{cor2}}

\begin{proof} To prove the corollary, it suffices to estimate the function $\Psi.$ Indeed, since $\rho R \geq \delta,$ we have 
\begin{equation*}
    Ric_f \geq (\delta + \lambda)g.
\end{equation*} So, by \cite[Theorem 0.4]{munteanu2015topology}, we get the following estimate
\begin{equation*}
    f(x) \geq \frac{\delta+\lambda}{2}r(x)^2 - ar(x),
\end{equation*}
for some positive constant $a.$ This therefore implies that
\begin{equation}\label{eq15}
    -\frac{2}{r}\int_{0}^{r}f(r,\theta)drd\theta \leq -\frac{(\delta+\lambda)r^2}{3} + ar.
\end{equation}

On the other hand, our assumption implies
\begin{equation}\label{eq16}
    -\frac{1}{r}\int_{0}^r\int_{0}^s t\rho Rdtds \leq -\frac{\delta}{r}\int_{0}^r\int_{0}^{s}tdtds = -\frac{\delta r^2}{6}.
\end{equation}
Plugging (\ref{eq15}) and (\ref{eq16}) into the expression of $\Psi$ in Theorem \ref{theo5}, we conclude that there exists a constant $c > 0$ such that
\begin{equation}
    \Psi \leq -c r^2 + ar.
\end{equation} Finally, taking $r_0$ large enough, it follows that
\begin{equation*}
    Vol_{f}(B_p(r)) \leq \int_{\mathbb{S}^{n-1}}\int_{0}^{r}e^{\frac{r^2}{2}}r^{n-1}dr d\theta,
\end{equation*} as asserted. 
\end{proof}

	\noindent{\bf Conflict of Interest:} There is no conflict of interest to disclose.
	
	\

\noindent{\bf Data Availability:} Not applicable.
	
	\
	
\noindent{{\bf Acknowledgments.}} The author wishes to thank the referees for their careful reading and valuable suggestions.

\end{document}